\newtheorem{thm}{Theorem}
\newtheorem{cor}{Corollary}
\newtheorem{lem}{Lemma}
\newtheorem{rem}{Remark}
\newcommand{\es}{{\mathcal S}}
\newcommand{\D}{{\mathbb D}}
\def\be{\begin{equation}}
\def\ee{\end{equation}}
\newcommand{\bee}{\begin{enumerate}}
\newcommand{\eee}{\end{enumerate}}
\newcommand{\blem}{\begin{lem}}
\newcommand{\elem}{\end{lem}}
\newcommand{\bthm}{\begin{thm}}
\newcommand{\ethm}{\end{thm}}
\newcommand{\bcor}{\begin{cor}}
\newcommand{\ecor}{\end{cor}}
\newcommand{\beg}{\begin{example}}
\newcommand{\eeg}{\end{example}}
\newcommand{\begs}{\begin{examples}}
\newcommand{\eegs}{\end{examples}}
\newcommand{\bdefe}{\begin{defin}}
\newcommand{\edefe}{\end{defin}}
\newcommand{\bprob}{\begin{prob}}
\newcommand{\eprob}{\end{prob}}
\newcommand{\bei}{\begin{itemize}}
\newcommand{\eei}{\end{itemize}}
\newcommand{\bcon}{\begin{conj}}
\newcommand{\econ}{\end{conj}}
\newcommand{\bcons}{\begin{conjs}}
\newcommand{\econs}{\end{conjs}}
\newcommand{\bprop}{\begin{propo}}
\newcommand{\eprop}{\end{propo}}
\newcommand{\br}{\begin{rem}}
\newcommand{\er}{\end{rem}}
\newcommand{\brs}{\begin{rems}}
\newcommand{\ers}{\end{rems}}
\newcommand{\bo}{\begin{obser}}
\newcommand{\eo}{\end{obser}}
\newcommand{\bos}{\begin{obsers}}
\newcommand{\eos}{\end{obsers}}
\newcommand{\bpf}{\begin{pf}}
\newcommand{\epf}{\end{pf}}
\newcommand{\ba}{\begin{array}}
\newcommand{\ea}{\end{array}}
\newcommand{\beq}{\begin{eqnarray}}
\newcommand{\beqq}{\begin{eqnarray*}}
\newcommand{\eeq}{\end{eqnarray}}
\newcommand{\eeqq}{\end{eqnarray*}}
\begin{document}
\bibliographystyle{amsplain}

\title[Two applications of Grunsky coefficients]{Two applications of Grunsky coefficients in the theory of univalent functions}

\author[M. Obradovi\'{c}]{Milutin Obradovi\'{c}}
\address{Department of Mathematics,
Faculty of Civil Engineering, University of Belgrade,
Bulevar Kralja Aleksandra 73, 11000, Belgrade, Serbia}
\email{obrad@grf.bg.ac.rs}

\author[N. Tuneski]{Nikola Tuneski}
\address{Department of Mathematics and Informatics, Faculty of Mechanical Engineering, Ss. Cyril and Methodius
University in Skopje, Karpo\v{s} II b.b., 1000 Skopje, Republic of North Macedonia.}
\email{nikola.tuneski@mf.edu.mk}

\subjclass[2020]{30C45, 30C50, 30C55}
\keywords{univalent functions, Grunsky coefficients, fourth logarithmic coefficient, coefficient difference}
%\date{\today  %June. 30, 09
%;  File: }

%\begin{abstract}

%\end{abstract}

%\thanks{The work of the first author was supported by MNZZS Grant, No. ON174017, Serbia. The research of the second
%author was supported by National Board for Higher
%Mathematics, India.}

\begin{abstract}
Let $\mathcal{S}$ denote the class of functions $f$ which are analytic and univalent in the unit disk $\D=\{z:|z|<1\}$ and normalized with  $f(z)=z+\sum_{n=2}^{\infty} a_n z^n$.
Using a method based on Grusky coefficients we study  two problems over the class $\mathcal{S}$: estimate of the fourth logarithmic coefficient and upper bound of the coefficient difference $|a_5|-|a_4|$.
\end{abstract}

\maketitle
%\pagestyle{myheadings}
%\markboth{M. Obradovi\'{c} and  S. Ponnusamy }{}
%\cc
%\section{Introduction}

\medskip

\section{Introduction and definitions}

\medskip

Let $\mathcal{A}$ be the class of functions $f$ which are analytic  in the open unit disc $\D=\{z:|z|<1\}$ of the form
\be\label{e1}
f(z)=z+a_2z^2+a_3z^3+\cdots,
\ee
and let $\mathcal{S}$ be the subclass of $\mathcal{A}$ consisting of functions that are univalent in $\D$.

\medskip
For $f\in \mathcal{S}$ the logarithmic coefficients, $\gamma_n$, are defined by
\be\label{e2} \log\frac{f(z)}{z}=2\sum_{n=1}^\infty \gamma_n z^n.
\ee
Relatively little exact information is known about these coefficients. The natural conjecture $|\gamma_n|\le1/n$, inspired by the Koebe function (whose logarithmic coefficients are $1/n$) is false even in order of magnitude (see Duren \cite[Section 8.1]{duren}).
For the class $\mathcal{S}$ the sharp estimates of single logarithmic coefficients  are known only for $\gamma_1$ and $\gamma_2$, namely,
\[|\gamma_1|\le1\quad\mbox{and}\quad |\gamma_2|\le \frac12+\frac1e=0.635\ldots,\]
and are unknown for $n\ge3$. In \cite{MONT-thai} the authors  gave the estimate $|\gamma_3|\le0.5566178\ldots$ for the class $\mathcal{S}$. In this paper for the same class we give the estimation $|\gamma_4|\le 0.51059\ldots$.
For the subclasses of univalent functions  the situation is not a great deal better. Only the estimates of the initial logarithmic coefficients are available. For details see \cite{cho}.

\medskip
Another problem is finding sharp upper and lower bounds of the coefficient difference $|a_{n+1}|-|a_n|$ over the class of univalent functions. Since  the Keobe function has coefficients $a_n=n$, it is natural to conjecture  that $||a_{n+1}|-|a_n||\le1$. But this is false even when $n=2,$ due to Fekete and Szeg\"o (\cite{FekSze33}) who obtained the sharp bounds
 $$ -1 \leq |a_3| - |a_2| \leq \frac{3}{4} + e^{-\lambda_0}(2e^{-\lambda_0}-1) = 1.029\ldots, $$
where $\lambda_0$ is the unique solution of the equation $4\lambda = e^{\lambda}$ on the interval $(0,1)$.
Hayman in \cite{Hay63} showed that if $f \in {\mathcal S}$, then $| |a_{n+1}| - |a_n| | \leq C$, where $C$ is an absolute constant
and the best estimate of $C$ is  $3.61\ldots$ (\cite{Gri76}). In the case when $n=3$ in \cite{MONT-thai}, the authors improved this to $1.751853\ldots$.
In this paper we also consider the difference $ |a_{5}| - |a_{4}| $.
\medskip

For the study of the problems defined above  we will use method based on Grunsky coefficients.
In the proofs we will use mainly the notations and results given in the book of N. A. Lebedev (\cite{Lebedev}).

\medskip

Here are basic definitions and results.

\medskip

Let $f \in \mathcal{S}$ and let
\[
%\be\label{e2}
\log\frac{f(t)-f(z)}{t-z}=\sum_{p,q=0}^{\infty}\omega_{p,q}t^{p}z^{q},
%\ee
\]
where $\omega_{p,q}$ are so called Grunsky's coefficients with property $\omega_{p,q}=\omega_{q,p}$.
For those coefficients we have the next Grunsky's inequality (\cite{duren,Lebedev}):
\be\label{eq 3}
\sum_{q=1}^{\infty}q \left|\sum_{p=1}^{\infty}\omega_{p,q}x_{p}\right|^{2}\leq \sum_{p=1}^{\infty}\frac{|x_{p}|^{2}}{p},
\ee
where $x_{p}$ are arbitrary complex numbers such that last series converges.

\medskip

Further, it is well-known that if $f$ given by \eqref{e1}
belongs to $\mathcal{S}$, then also
\be\label{eq4}
f_{2}(z)=\sqrt{f(z^{2})}=z +c_{3}z^3+c_{5}z^{5}+\cdots
\ee
belongs to the class $\mathcal{S}$. So, for the function $f_{2}$ we have the appropriate Grunsky's
coefficients of the form $\omega_{2p-1,2q-1}^{(2)}$ and inequality \eqref{eq 3} reaches the form:
\be\label{eq5}
\sum_{q=1}^{\infty}(2q-1) \left|\sum_{p=1}^{\infty}\omega_{2p-1,2q-1}x_{2p-1}\right|^{2}\leq \sum_{p=1}^{\infty}\frac{|x_{2p-1}|^{2}}{2p-1}.
\ee

\medskip

Here, and further in the paper we omit the upper index "(2)" in  $\omega_{2p-1,2q-1}^{(2)}$ if compared with Lebedev's notation.

\medskip

From inequality \eqref{eq5}, when $x_{2p-1}=0$ and $p=3,4,\ldots$, we have
\begin{equation}\label{e6}
\begin{split}
&\quad |\omega_{11} x_1 +\omega_{31} x_3 |^2 +3|\omega_{13} x_1 +\omega_{33} x_3 |^2 + 5|\omega_{15} x_1 +\omega_{35} x_3 |^2 \\
&+7|\omega_{17} x_1 +\omega_{37} x_3 |^2\leq |x_1|^2+\frac{|x_3|^2}{3}.
\end{split}
\end{equation}

\medskip

As it has been shown in \cite[p.57]{Lebedev}, if $f$ is given by \eqref{e1} then the coefficients $a_{2}$, $ a_{3}$, $ a_{4}$ and $a_5$ are expressed by Grunsky's coefficients  $\omega_{2p-1,2q-1}$ of the function $f_{2}$ given by
\eqref{eq4} in the following way:
\be\label{e7}
\begin{split}
a_{2}&=2\omega _{11},\\
a_{3}&=2\omega_{13}+3\omega_{11}^{2}, \\
a_{4}&=2\omega_{33}+8\omega_{11}\omega_{13}+\frac{10}{3}\omega_{11}^{3},\\
a_{5}&=2\omega_{35}+8\omega_{11}\omega_{33}+5\omega_{13}^{2}+18\omega_{11}^2\omega_{13}+\frac73\omega_{11}^4,\\
0&= 3\omega_{15}-3\omega_{11}\omega_{13}+\omega_{11}^3-3\omega_{33},\\
0&=\omega_{17}-\omega_{35}-\omega_{11}\omega_{33}-\omega_{13}^{2}+\frac{1}{3}\omega_{11}^{4}.
\end{split}
\ee

\medskip
We note that in the cited book of Lebedev there is a typing mistake for the coefficient $a_{5}$. Namely, instead of the term $5\omega_{13}^{2}$  there stays $5\omega_{15}^{2}$.

\medskip

We now give upper bound of the fourth logarithmic coefficient over the class $\es$.

\medskip

\begin{thm}\label{th1}
Let $f\in\mathcal{S}$ and be given by \eqref{e1}. Then
\[|\gamma_4| \le 0.51059\ldots.\]
\end{thm}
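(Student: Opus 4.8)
The plan is to express $\gamma_4$ in terms of the early coefficients $a_2,a_3,a_4,a_5$ via \eqref{e2}, then substitute the Grunsky representations \eqref{e7} so that $\gamma_4$ becomes a polynomial in the coefficients $\omega_{11},\omega_{13},\omega_{33},\omega_{35}$ of $f_2$, and finally bound that polynomial using the specialization \eqref{e6} of the Grunsky inequality. Comparing coefficients in $\log\frac{f(z)}{z}=2\sum\gamma_n z^n$ gives
\[
2\gamma_4 = a_5 - a_2 a_4 - \tfrac12 a_3^2 + a_2^2 a_3 - \tfrac14 a_2^4,
\]
and after inserting \eqref{e7} and simplifying (here the relations $0=3\omega_{15}-3\omega_{11}\omega_{13}+\omega_{11}^3-3\omega_{33}$ and $0=\omega_{17}-\omega_{35}-\omega_{11}\omega_{33}-\omega_{13}^2+\tfrac13\omega_{11}^4$ may be used to eliminate nuisance terms and to bring in $\omega_{17}$) one gets an expression of the shape $2\gamma_4 = 2\omega_{35}+(\text{lower-order products of }\omega_{11},\omega_{13},\omega_{33})$. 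The first step, then, is to carry out this algebra carefully and record the exact polynomial identity.

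Next I would choose the free parameters $x_1,x_3$ in \eqref{e6} to extract the combination that appears in $\gamma_4$. The left side of \eqref{e6} contains $|\omega_{15}x_1+\omega_{35}x_3|^2$ with weight $5$ and $|\omega_{13}x_1+\omega_{33}x_3|^2$ with weight $3$; a judicious choice (for instance $x_1$ a suitable function of $\omega_{11}$ and $x_3$ normalized, with phases chosen so the relevant terms are real and add constructively) turns \eqref{e6} into an inequality of the form
\[
5|\omega_{35}+(\cdots)\omega_{11}|^2 + 3|\omega_{33}+(\cdots)\omega_{11}|^2 + \cdots \le |x_1|^2+\tfrac{|x_3|^2}{3}.
\]
From this one reads off a bound $|\omega_{35}|\le$ (something involving $|\omega_{11}|,|\omega_{13}|,|\omega_{33}|$), and similarly the classical bounds $|\omega_{11}|\le1$, $|\omega_{13}|$, $|\omega_{33}|$ controlled by \eqref{eq5} with one nonzero $x_{2p-1}$. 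Substituting everything into the identity for $2\gamma_4$ and using the triangle inequality reduces the problem to maximizing a concrete function of a few real variables (essentially $t=|\omega_{11}|\in[0,1]$ and the moduli of $\omega_{13},\omega_{33}$, possibly with one angular parameter) over an explicit compact region.

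The last step is that optimization: differentiate, locate the critical point, and check the boundary, to obtain the numerical value $0.51059\ldots$. I expect the main obstacle to be the bookkeeping in the second step — namely packaging \eqref{e6} so that the same quadratic form simultaneously controls $\omega_{35}$, $\omega_{33}$ and their cross-terms with $\omega_{11}$ in exactly the combination dictated by the $\gamma_4$ identity, rather than settling for a lossy term-by-term estimate. Getting the phases and the weights $1,3,5,7$ to cooperate is what makes the final constant sharp-looking (as opposed to a crude bound near $0.7$), and it is where the choice of $x_1,x_3$ must be made with care; once the right reduction is in hand, the remaining one- or two-variable extremal problem is routine calculus.
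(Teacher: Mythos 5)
Your overall strategy --- the logarithmic-coefficient identity, substitution of the Grunsky representations \eqref{e7}, an application of \eqref{e6}, and a final optimization --- is the same as the paper's, and your first step is exactly right: $2\gamma_4=a_5-a_2a_4-\tfrac12a_3^2+a_2^2a_3-\tfrac14a_2^4$. The gap is in your second step, which you yourself flag as ``the main obstacle'' and leave unspecified. You propose to keep $\omega_{33}$ and $\omega_{35}$ in the expression for $\gamma_4$ and then hunt for a nontrivial choice of $x_1,x_3$ (with matched phases) in \eqref{e6} so that the cross terms $|\omega_{15}x_1+\omega_{35}x_3|^2$ and $|\omega_{13}x_1+\omega_{33}x_3|^2$ control $|\omega_{35}|$ and $|\omega_{33}|$ in the exact combination appearing in $\gamma_4$. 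No such choice is exhibited, and it is not clear any single choice does all of this at once; as written this step is a hope, not an argument.

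The resolution is in fact simpler than what you envision. The two zero-relations in \eqref{e7} give $\omega_{33}=\omega_{15}-\omega_{11}\omega_{13}+\tfrac13\omega_{11}^3$ and $\omega_{35}=\omega_{17}-\omega_{11}\omega_{15}+\omega_{11}^2\omega_{13}-\omega_{13}^2$, so $\omega_{33}$ and $\omega_{35}$ can be eliminated entirely, leaving
\[
\gamma_4=\omega_{17}+\omega_{11}\omega_{15}+\omega_{11}^{2}\omega_{13}+\tfrac{1}{2}\omega_{13}^{2}+\tfrac{1}{4}\omega_{11}^{4},
\]
which involves only $\omega_{11},\omega_{13},\omega_{15},\omega_{17}$. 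Then the trivial specialization $x_1=1$, $x_3=0$ of \eqref{e6} gives $|\omega_{11}|^2+3|\omega_{13}|^2+5|\omega_{15}|^2+7|\omega_{17}|^2\le1$, and the plain term-by-term triangle inequality (which you dismiss as lossy) together with this one quadratic constraint already yields $0.51059\ldots$; no phase engineering is needed. You also underestimate the final step: after eliminating $t=|\omega_{17}|$ one is left with a genuine three-variable constrained maximization whose interior critical point and all boundary faces of the region must be examined (the paper does this numerically with Mathematica), not a one- or two-variable exercise. So the proposal names the right ingredients, but the step it leaves open --- the algebraic elimination of $\omega_{33}$ and $\omega_{35}$ --- is precisely the one that makes the proof work.
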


\medskip

\begin{proof}
From \eqref{e1} and \eqref{e2}, after differentiating and comparing  coefficients, we receive
\[\gamma_4 =\frac{1}{2}\left(a_5-a_2a_4-\frac{1}{2}a_3^2+a_{2}^{2}a_{3}-\frac{1}{4}a_{2}^{4}\right),\]
or by using the relation \eqref{e7}:
\be\label{e8}
\gamma_4 =\frac{1}{2}\left(2\omega_{35}+3\omega_{13}^{2}+4\omega_{11}\omega_{33}+4\omega_{11}^{2}\omega_{13}
-\frac{5}{6}\omega_{11}^{4}\right).
\ee
If we combine the two last relations from \eqref{e7}, then we have
\be\label{e9}
\omega_{33} =  \omega_{15}-\omega_{11}\omega_{13}+\frac{1}{3}\omega_{11}^3
\ee
and
\be\label{e10}
\omega_{35}=\omega_{17}-\omega_{11}\omega_{33}-\omega_{13}^{2}+\frac{1}{3}\omega_{11}^{4}
=\omega_{17}-\omega_{11}\omega_{15}+\omega_{11}^{2}\omega_{13}-\omega_{13}^{2}.
\ee
Using the relations \eqref{e8}, \eqref{e9}and \eqref{e10}, after some calculations, we get
\[\gamma_4=\omega_{17}+\omega_{11}\omega_{15}+\omega_{11}^{2}\omega_{13}+
\frac{1}{2}\omega_{13}^{2}+\frac{1}{4}\omega_{11}^{4}.\]
Therefore,
\be\label{e11}
\begin{split}
|\gamma_4|&\leq |\omega_{17}|+|\omega_{11}||\omega_{15}|+|\omega_{11}|^{2}|\omega_{13}|+
\frac{1}{2}|\omega_{13}|^{2}+\frac{1}{4}|\omega_{11}|^{4}\\
&:=\varphi(|\omega_{11}|,|\omega_{13}|,|\omega_{15}|,|\omega_{17}|) .
\end{split}
\ee
Now, choosing $x_1=1$ and $x_3=0$ in \eqref{e6} we receive
\[ |\omega_{11} |^2 +3|\omega_{13} |^2 + 5|\omega_{15}|^2+7|\omega_{17}|^2 \leq 1, \]
and also
\[|\omega_{11}|\leq 1, \quad |\omega_{11}|^2 +3|\omega_{13} |^2\leq 1,\quad
|\omega_{11} |^2 +3|\omega_{13} |^2 + 5|\omega_{15}|^2\leq 1 . \]
The above inequalities imply
\be\label{e12}
\begin{split}
|\omega_{13}|&\le\frac{1}{\sqrt{3}}\sqrt{1-|\omega_{11}|^2}\,,\\
 |\omega_{15}|&\le\frac{1}{\sqrt{5}}\sqrt{1-|\omega_{11}|^2-3|\omega_{13}|^2}\,,\\
 |\omega_{17}| &\le\frac{1}{\sqrt{7}}\sqrt{1-|\omega_{11}|^2-3|\omega_{13}|^2-5|\omega_{15}|^{2}}\,.
\end{split}
\ee
Using \eqref{e11} and \eqref{e12} we conclude that it remains  to find $\max \varphi_{1}$, where
\[
\varphi_1(x,y,z,t)=\frac{1}{4}x^{4}+\frac{1}{2}y^{2}+x^{2}y+xz+t,
\]
where $(x,y,z,t)$ is in the four dimensional hypercube $\Omega$ described with
\begin{equation}\label{xyzt}
\begin{split}
0&\leq x=|\omega_{11}|\leq1,\\
0&\leq y=|\omega_{13}|\leq\frac{1}{\sqrt{3}}\sqrt{1-x^{2}},\\
0&\leq z=|\omega_{15}|\leq\frac{1}{\sqrt{5}}\sqrt{1-x^{2}-3y^{2}},\\
0&\leq t=|\omega_{17}|\leq\frac{1}{\sqrt{7}}\sqrt{1-x^{2}-3y^{2}-5z^{2}}.
\end{split}
\end{equation}

\medskip

Since $\varphi_1$ is an increasing function of $t$ on the interval $(0,+\infty)$, we realize that it reaches its maximal value for $t=t_0=\frac{1}{\sqrt{7}}\sqrt{1-x^{2}-3y^{2}-5z^{2}}$, i.e.,
\[ \max\{\varphi_1(x,y,z,t): (x,y,z,t)\in\Omega\} = \max\{\psi_1(x,y,z): (x,y,z)\in\Omega_1\}, \]
where $\psi_1(x,y,z)\equiv \varphi_1(x,y,z,t_0)$ and
\[ \Omega_1 = \left\{(x,y,z): 0\le x\le1,\, 0\le y\le \frac{1}{\sqrt3}\sqrt{1-x^2},\, 0\le z\le \frac{1}{\sqrt5}\sqrt{1-x^2-3y^2}\right\}. \]
The system of equations
\[
\left\{
\begin{split}
\frac{\partial \psi_1}{\partial x} &= x^3+2 x y+z-\frac{x/\sqrt{7}}{ \sqrt{1-x^2-3 y^2-5 z^2}}=0\\
\frac{\partial \psi_1}{\partial y} &= x^2+y-\frac{3 y/\sqrt{7}}{ \sqrt{1-x^2-3 y^2-5 z^2}}=0\\
\frac{\partial \psi_1}{\partial z} &= x-\frac{5 z/\sqrt{7}}{ \sqrt{1-x^2-3 y^2-5 z^2}}=0
\end{split}
\right.
\]
%has a unique solution in the interior of $\Omega_1$ because $\frac{\partial \psi_1}{\partial y}$ and $\frac{\partial \psi_1}{\partial z}$ are decreasing functions of $z$ and $y$, respectively.
in the interior of $\Omega_1$, has a unique solution,
\[x_0 = 0.81907\ldots,\quad  y_0 = 0.233235\ldots, \quad z_0 = 0.126778\ldots, \]
(obtained with Wolfram's Mathematica) with $\psi_1(x_0,y_0,z_0,t_0) = \psi_1(x_0,y_0,z_0) = 0.51059\ldots$ which will turn out to be the maximal value of $\varphi_1$ on $\Omega$ and an upper bound for $|\gamma_4|$.

\medskip

Now we will study the behaviour of $\psi_1$ on the boundaries of $\Omega_1$.

\medskip

For $x=0$, we have that $\psi_1(0,y,z)=\frac12y^2+\frac{1}{\sqrt7}\sqrt{1-3y^2-5z^2}$ for $0\le y\le \frac{1}{\sqrt3}$ and $0\le z\le \frac{1}{\sqrt5}\sqrt{1-3y^2}$ has maximal value $\frac{1}{\sqrt7}=0.37796\ldots$ attained when $y=z=0$.

\medskip

Next, for $x=1$, we have  necessarily  $y=z=0$, which leads to a maximal value $\psi_1(1,0,0)=1/4=0.25$.

\medskip

For the case $y=0$, we have $\psi_1(x,0,z)=\frac14x^4+xz+\frac{1}{\sqrt7}\sqrt{1-x^2-5z^2}$, with $0\le x\le 1$ and $0\le z\le \frac{1}{\sqrt5}\sqrt{1-x^2}$.
Further, for the solution $(x_1,z_1)$ of the system of equations
\[
\left\{
\begin{split}
\frac{\partial\psi_1(x_1,0,z_1)}{\partial x} &= x_1^3+z_1-\frac{x_1/\sqrt{7}}{ \sqrt{-x_1^2-5 z_1^2+1}}=0\\
\frac{\partial\psi_1(x_1,0,z_1)}{\partial z} &= x_1-\frac{5 z_1/\sqrt{7}}{\sqrt{-x_1^2-5 z_1^2+1}}=0
\end{split}
\right.,
\]
we have
\[ -5 z_1^2 -5 x_1^3 z_1+x_1^2 = 0,\]
leading further to
\[ z_1 = \frac{1}{10} \left(-5 x_1^3+\sqrt{5} \sqrt{5 x_1^6+4 x_1^2}\right).\]
Finally,
\[
\begin{split}
&\quad  \psi_1(x,0,z) \le \psi(x_1,0,z_1)\\
  &= -\frac{x_1^4}{4}+\frac{1}{2} \sqrt{x_1^6+\frac{4 x_1^2}{5}} x_1+\frac{1}{\sqrt{14}}\left[\sqrt{\left(-5 x_1^4+\sqrt{5} \sqrt{x_1^2 \left(5 x_1^4+4\right)} x_1-4\right) x_1^2+2} \right]
 \end{split}
\]
By the means of calculus of real functions of one real variables, one can verify that the last function
attains its maximum for $x=0.80210\ldots$ and $z=0.183847\ldots$, and that maximum is $0.414666\ldots$.

\medskip

The case $y=\frac{1}{\sqrt{3}}\sqrt{1-x^{2}}$, leads to $z=0$, and further to the function
\[ \psi_1\left(x,\frac{1}{\sqrt{3}}\sqrt{1-x^{2}},0\right) = \frac{x^4}{4}+\frac{x^2\sqrt{1-x^2} }{\sqrt{3}}+\frac{1}{6} \left(1-x^2\right)\]
with maximum $0.4000\ldots$ for $x=0.8874\ldots$ and $y=0.2661\ldots$.

\medskip

For $z=0$, we have $\psi_1(x,y,0) = \frac{x^4}{4}+x^2 y+\frac{y^2}{2}+\frac{1}{\sqrt{7}}\sqrt{-x^2-3 y^2+1}$ and working in the similar way as in the case $y=0$, we receive its maximum on $0\le x\le1$ and $0\le y\le \frac{1}{\sqrt3}\sqrt{1-x^2}$ to be $0.4561\ldots$ for $x=0.8358\ldots$ and $y=0.2619\ldots$.

\medskip

Finally, in a similar way as before, for the case $z=\frac{1}{\sqrt{5}}\sqrt{1-x^{2}-3y^{2}}$, by means of calculus, we can verify that the maximal value is $0.4570\ldots$ obtained for $x= 0.864969\ldots$ and $y = 0.239789\ldots$.
\end{proof}

\medskip

We now give upper bound of $|a_5|-|a_4|$ over the class $\es$.

\medskip

\begin{thm}
Let $f\in\es$ and be given by \eqref{e1}. Then
\[|a_5|-|a_4| \le 2.3297\ldots.\]
\end{thm}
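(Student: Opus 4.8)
The plan is to mirror the proof of Theorem \ref{th1}: express $a_4$ and $a_5$ in terms of the Grunsky coefficients $\omega_{11},\omega_{13},\omega_{15},\omega_{17},\omega_{33},\omega_{35}$ via \eqref{e7}, use the two linear relations in \eqref{e7} (equivalently \eqref{e9} and \eqref{e10}) to eliminate $\omega_{33}$ and $\omega_{35}$, and thereby rewrite $a_5$ purely in terms of $\omega_{11},\omega_{13},\omega_{15},\omega_{17}$. This gives
\[
a_4 = 2\omega_{15}+6\omega_{11}\omega_{13}+\tfrac43\omega_{11}^3,\qquad
a_5 = 2\omega_{17}+6\omega_{11}\omega_{15}+3\omega_{13}^2+6\omega_{11}^2\omega_{13}+\omega_{11}^4,
\]
after substituting \eqref{e9}, \eqref{e10} into the $a_4,a_5$ lines of \eqref{e7} (the exact numerical coefficients should be recomputed carefully). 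Then I would apply the triangle inequality to get $|a_5|-|a_4|\le |a_5|+ \text{(lower bound for }|a_4|\text{ does not help, so drop it)}$ — more precisely bound $|a_5|$ from above by its modulus-expression and $|a_4|\ge 0$ is the only universally safe thing, but that is too lossy; instead the sharper route is $|a_5|-|a_4|\le |a_5-a_4|$ or, better, bound $|a_5|$ above and estimate $-|a_4|$ by observing $|a_4|\ge$ (real part of a chosen branch). The cleanest approach, following the $|\gamma_3|$ paper cited as \cite{MONT-thai}, is to write $|a_5|-|a_4|\le |a_5|-|a_4|$ with both terms controlled after a rotation normalization making $\omega_{11}\ge 0$, then majorize $|a_5|$ by
\[
|a_5|\le 2|\omega_{17}|+6|\omega_{11}||\omega_{15}|+3|\omega_{13}|^2+6|\omega_{11}|^2|\omega_{13}|+|\omega_{11}|^4
\]
and use that $-|a_4|\le -\bigl(\tfrac43 x^3 - 6xy - 2z\bigr)$ is false in general, so instead one keeps $|a_4|$ and handles the difference as a function to be maximized.

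Concretely, I would reduce to maximizing over the same hypercube $\Omega$ from \eqref{xyzt}: set $x=|\omega_{11}|,\,y=|\omega_{13}|,\,z=|\omega_{15}|,\,t=|\omega_{17}|$ subject to the Grunsky-derived constraints $x\le 1$, $x^2+3y^2\le 1$, $x^2+3y^2+5z^2\le 1$, $x^2+3y^2+5z^2+7t^2\le 1$, and bound
\[
|a_5|-|a_4|\ \le\ \Phi(x,y,z,t):=\bigl(2t+6xz+3y^2+6x^2y+x^4\bigr)-\bigl|\,2z+6xy+\tfrac43x^3\,\bigr|,
\]
but since the sign of the $a_4$-combination matters one must be cautious: with $x,y,z\ge 0$ we have $2z+6xy+\tfrac43x^3\ge 0$, so the absolute value drops and $\Phi(x,y,z,t)=2t+6xz+3y^2+6x^2y+x^4-2z-6xy-\tfrac43x^3$. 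This $\Phi$ is linear and increasing in $t$, so set $t=t_0=\tfrac1{\sqrt7}\sqrt{1-x^2-3y^2-5z^2}$; the remaining problem is $\max\psi(x,y,z)$ over the three-dimensional region $\Omega_1$, to be attacked by a single interior critical-point computation (via Mathematica, as in Theorem \ref{th1}) plus a case-by-case boundary analysis on the faces $x=0$, $x=1$, $y=0$, $y=\tfrac1{\sqrt3}\sqrt{1-x^2}$, $z=0$, and $z=\tfrac1{\sqrt5}\sqrt{1-x^2-3y^2}$, each reducing to one- or two-variable calculus.

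The main obstacle will be the boundary analysis and, underneath it, the legitimacy of the sign simplification: dropping the absolute value around $2z+6xy+\tfrac43x^3$ is fine for the \emph{upper} bound on $|a_5|-|a_4|$ precisely because it only decreases $-|a_4|$, i.e. replacing $|a_4|$ by the (possibly larger) quantity $2z+6xy+\tfrac43x^3$ makes $-|a_4|$ smaller, hence $\Phi$ is a valid upper bound — this needs to be stated carefully since it is the one non-mechanical point. After that, the critical-point system $\nabla\psi=0$ is structurally identical to the one in Theorem \ref{th1} (the $z$-equation gives $x=\tfrac{5z/\sqrt7}{\sqrt{1-x^2-3y^2-5z^2}}$, etc.), so I expect a unique interior solution near $x\approx 0.8$ yielding the value $2.3297\ldots$, with all six boundary faces giving strictly smaller maxima; verifying those six sub-maxima is the tedious-but-routine bulk of the write-up.
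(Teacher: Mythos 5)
There is a genuine gap at the heart of your proposal: the quantity $\Phi(x,y,z,t)=\bigl(2t+6xz+3y^2+6x^2y+x^4\bigr)-\bigl(2z+6xy+\tfrac43x^3\bigr)$ is \emph{not} an upper bound for $|a_5|-|a_4|$. To bound $|a_5|-|a_4|$ from above you need an upper bound for $|a_5|$ and a \emph{lower} bound for $|a_4|$; but the only thing the triangle inequality gives you in terms of $x=|\omega_{11}|$, $y=|\omega_{13}|$, $z=|\omega_{15}|$ is the \emph{upper} bound $|a_4|\le 2z+6xy+4x^3$ (note also the coefficient is $4$, not $\tfrac43$ — after substituting \eqref{e9} one gets $a_4=2\omega_{15}+6\omega_{11}\omega_{13}+4\omega_{11}^3$, and similarly $a_5=2\omega_{17}+6\omega_{11}\omega_{15}+12\omega_{11}^2\omega_{13}+3\omega_{13}^2+5\omega_{11}^4$). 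Since $a_4$ is a sum of complex terms that can partially cancel, $|a_4|$ can be much smaller than $2z+6xy+4x^3$, and then $\Phi$ undershoots $|a_5|-|a_4|$. Your own justification ("replacing $|a_4|$ by the possibly larger quantity makes $-|a_4|$ smaller, hence $\Phi$ is a valid upper bound") is exactly backwards: making $-|a_4|$ smaller makes $\Phi$ a \emph{lower} bound for the quantity you are trying to bound from above. So the reduction to a maximization over $\Omega$ never gets off the ground, and the bound $|a_5|\le 2t+6xz+3y^2+12x^2y+5x^4$ alone (the "too lossy" route you correctly rejected) gives a constant worse than $2.3297$.

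The missing idea, which is the one non-mechanical step in the paper's proof, is to exploit $|\omega_{11}|\le 1$ to write
\[
|a_5|-|a_4|\ \le\ |a_5|-|\omega_{11}|\,|a_4|\ \le\ |a_5-\omega_{11}a_4|,
\]
the second step being the reverse triangle inequality. The specific multiplier $\omega_{11}$ is chosen because the combination $a_5-\omega_{11}a_4$, after eliminating $\omega_{33}$ and $\omega_{35}$ via \eqref{e9} and \eqref{e10}, collapses to $2\omega_{17}+4\omega_{11}\omega_{15}+6\omega_{11}^2\omega_{13}+3\omega_{13}^2+\omega_{11}^4$, whose modulus is bounded by the single nonnegative expression $\varphi_2(x,y,z,t)=x^4+3y^2+6x^2y+4xz+2t$ — no subtraction of an $|a_4|$-term is needed. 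From that point on, your optimization scheme (increase in $t$, substitute $t_0$, interior critical point plus the six boundary faces of $\Omega_1$) is exactly what the paper does and yields the constant $2.3297\ldots$.
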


\begin{proof}
Since
\[
\begin{split}
|a_5|-|a_4| &\leq |a_5|-|\omega_{11}||a_4| \leq  |a_5-\omega_{11} a_4| \\
&= \Big|2\omega_{35} +6\omega_{11}\omega_{33} + 10\omega_{11}^{2}\omega_{13}+5\omega_{13}^{2}- \omega_{11}^4\Big|,
\end{split}
\]
%\ee
after applying \eqref{e9} and \eqref{e10}, and some calculations, we have
\[
\begin{split}
|a_5|-|a_4|
&\leq\Big|2\omega_{17} +4\omega_{11}\omega_{15} + 6\omega_{11}^{2}\omega_{13}+3\omega_{13}^{2} +\omega_{11}^4\Big|\\
&\leq 2|\omega_{17}| +4|\omega_{11}||\omega_{15}| + 6|\omega_{11}|^{2}|\omega_{13}|+3|\omega_{13}|^{2} +|\omega_{11}|^4\\
&:=\varphi_2(|\omega_{11}|,|\omega_{13}|,|\omega_{15}| ,|\omega_{17}|),
\end{split}
\]
where
\[ \varphi_2(x,y,z,t) = x^{4}+3y^{2}+6x^{2}y+4xz+2t, \]
with $x,y,z,t$, as well as their domain $\Omega$,  are given in \eqref{xyzt} from the previous theorem.

\medskip

Now, in a similar way as in the proof of the previous theorem we will find the maximal value of the function $\psi_2$ over the domain $\Omega$.

\medskip

The function $\varphi_2$ is an increasing one over the variable $t$, and therefore it reaches its maximal value for $t=t_0=\frac{1}{\sqrt{7}}\sqrt{1-x^{2}-3y^{2}-5z^{2}}$.
Using the notations
\[\psi_2(x,y,z)\equiv\varphi_2(x,y,z,t_0) = x^4+6 x^2 y+4 x z+3 y^2+\frac{2 \sqrt{-x^2-3 y^2-5 z^2+1}}{\sqrt{7}},\]
and $\Omega_1$ as in the Theorem \ref{th1},
again using Wolfram's Mathematica we obtain that the system of equations
\[
\left\{
\begin{split}
\frac{\partial \psi_2}{\partial x} &= 4 x^3+12 x y+4 z-\frac{2 x}{\sqrt{7} \sqrt{1-x^2-3 y^2-5 z^2}} =0\\
\frac{\partial \psi_2}{\partial y} &= 6 x^2 + y \left(6-\frac{6}{\sqrt{7} \sqrt{1-x^2-3 y^2-5 z^2}}\right)=0\\
\frac{\partial \psi_2}{\partial z} &= 4 x-\frac{10 z}{\sqrt{7} \sqrt{1-x^2-3 y^2-5 z^2}}=0
\end{split}
\right.,
\]
in the interior of $\Omega_1$ has a unique solution,
\[x_0 = 0.82745\ldots,\quad  y_0 = 0.29092\ldots, \quad z_0 = 0.098698\ldots\ldots, \]
such that $\varphi_2(x_0,y_0,z_0,t_0) = \psi_2(x_0,y_0,z_0) = 2.3297\ldots$. At the end, this will turn out to be the maximal value of $\psi_2$ on $\Omega_1$ and upper bound of $|a_5|-|a_4|$.

\medskip

Now we will study the behaviour of $\psi_2$ on the boundaries of $\Omega_1$.

\medskip

For $x=0$, we receive $\psi_2(0,y,z)=3 y^2+\frac{2 \sqrt{-3 y^2-5 z^2+1}}{\sqrt{7}}$ which is a decreasing function of $z$ (since $z$ is  positive), thus with the same maximal value as $\psi_2(0,y,0) = 3 y^2+\frac{2 \sqrt{1-3 y^2}}{\sqrt{7}}$ which turns out to be $1.142857\ldots$ for $y=0.5345\ldots$.

\medskip

If $x=1$, then necessarily $y=z=0$, and $\psi_2(1,0,0)=1$.

\medskip

For $y=0$, we have $\psi_2(x,0,z)= x^4+4 x z+\frac{2 \sqrt{1-x^2-5 z^2}}{\sqrt{7}}$ which can be shown to have no critical points in the interior of
\[ \left\{ (x,z) : 0\le x\le1, 0\le z\le  \frac{1}{\sqrt{5}}\sqrt{1-x^2}\right\}, \]
and a maximal value $1.3614\ldots$ obtained for $x_1=0.9181\ldots$ and $z_1= \frac{1}{\sqrt{5}}\sqrt{1-x_1^2} = 0.1772\ldots$.

\medskip

For the case $y^*=\frac{1}{\sqrt{3}} \sqrt{1-x^2}$, we have $z=0$, and further, the function
\[ \psi_2\left(x, y^*, 0 \right) = x^4+\left(2 \sqrt{3-3 x^2}-1\right) x^2+1\]
has maximal value $2.118588\ldots$ for $x=0.8427\ldots$.

\medskip

Next, for $z=0$ we receive
\[\psi_2(x,y,0) = x^4+6 x^2 y+3 y^2+\frac{2 \sqrt{-x^2-3 y^2+1}}{\sqrt{7}},\]
with a critical point $(x_2,y_2)$ ($x=0.83589\ldots$ and $y=0.3097\ldots$) in the interior of
\[ \left\{ (x,y) : 0\le x\le1, 0\le y\le  \frac{1}{\sqrt{3}}\sqrt{1-x^2}\right\}, \]
such that $\psi_2(x_2,y_2,0) = 2.162\ldots$. The boundaries of the above domain are already discussed above.

\medskip

Finally, if $z^* = \frac{1}{\sqrt{5}}\sqrt{-x^2-3 y^2+1}$, one can verify that the function
\[ \psi_2(x,y,z) = x^4+6 x^2 y+3 y^2+\frac{4 x \sqrt{-x^2-3 y^2+1}}{\sqrt{5}}, \]
has critical point $(x_3,y_3)$ with $x_3=0.8338\ldots$ and $y_3=0.2921\ldots$, such that $\psi_2(x_3,y_3,z^*) = 2.287\ldots$.

\medskip

All the above analysis brings us to the final conclusion that $\psi_2$ on $\Omega_1$ has a maximal value $2.3297\ldots$ obtained for $x=x_0$, $y=y_0$ and $z=z_0$.
\end{proof}

\medskip

\end{document}